\documentclass[runningheads]{llncs}

\usepackage[T1]{fontenc}
\usepackage{amsmath,amssymb,mathtools}
\usepackage{booktabs,array,tabularx}
\usepackage{enumitem}
\usepackage{float}
\usepackage[section]{placeins}
\usepackage{tikz}
\usetikzlibrary{arrows.meta,calc,decorations.pathreplacing,fit,positioning}
\usepackage{hyperref}
\usepackage{xcolor}
\hypersetup{
    colorlinks=true,
    linkcolor=blue,
    urlcolor=blue,
    citecolor=blue
}
\usepackage{thmtools,thm-restate}

\DeclareMathOperator{\rad}{rad}
\DeclareMathOperator{\diam}{diam}
\DeclareMathOperator{\MP}{mp}

\newcommand{\gammab}{\gamma_b}
\newcommand{\ceil}[1]{\left\lceil #1\right\rceil}
\newcommand{\floor}[1]{\left\lfloor #1\right\rfloor}
\newcommand{\ball}[2]{N_{#2}[#1]}

\tikzset{
  pathline/.style={line width=.85pt},
  selected/.style={circle,fill=black,inner sep=1.9pt},
  ordinary/.style={circle,draw=black,fill=white,inner sep=1.6pt,line width=.6pt},
  commonvertex/.style={circle,draw=black,fill=white,inner sep=2.5pt,line width=.9pt},
  filledvertex/.style={circle,draw=black,fill=black,inner sep=2.5pt},
  ghostball/.style={dashed,rounded corners=12pt,line width=.7pt}
}

\title{Broadcast Domination Number is at Most Twice the Multipacking Number}
\titlerunning{Broadcast Domination and Multipacking}
\author{Sk Samim Islam}
\authorrunning{Sk Samim Islam}
\institute{Indian Statistical Institute, Kolkata}

\begin{document}
\maketitle

\begin{abstract}
  For a graph $ G = (V, E) $ with a vertex set $ V $ and an edge set $ E $, a function
	$ f : V \rightarrow \{0, 1, 2, . . . , diam(G)\} $ is called a \emph{broadcast} on $ G $. For each
	vertex $ u \in V  $, if  there exists a vertex $ v $ in $ G $ (possibly, $ u = v $) such that $ f (v) > 0 $ and
	$ d(u, v) \leq f (v) $, then $ f $ is called a dominating broadcast on $ G $.  The cost of the dominating broadcast $f$ is the quantity $ \sum_{v\in V}f(v) $. The minimum cost of a
	dominating broadcast is the broadcast domination number of $G$, denoted by $ \gamma_{b}(G) $. 
	    
	    A multipacking is a set $ M \subseteq V  $ in a
	graph $ G = (V, E) $ such that for every vertex $ v \in V $ and for every integer $ r \geq 1 $, the
	ball of radius $ r $ around $ v $ contains at most $ r $ vertices of $ M $, that is, there are at most
	$ r $ vertices in $ M $ at a distance at most $ r $ from $ v $ in $ G $. The
	multipacking number of $ G $ is the maximum cardinality of a multipacking of $ G $ and
	is denoted by $ \MP(G) $.
    
It is known that $\MP(G)\leq\gammab(G)$. In 2014, Hartnell and Mynhardt
proved that $\gammab(G)\leq3\MP(G)-2$ whenever $\MP(G)\geq2$. In 2019,
Beaudou, Brewster, and Foucaud improved this bound to
$\gammab(G)\leq2\MP(G)+3$ and conjectured that
$\gammab(G)\leq2\MP(G)$. We solve
their conjecture by proving that $\gammab(G)\leq2\MP(G)$  for every graph $G$. Our proof is
constructive and yields a polynomial-time $2$-approximation algorithm for
Maximum Multipacking problem which improves the earlier approximation factor
$2+o(1)$.
\keywords{Broadcast domination \and Multipacking  \and Approximation algorithms}
\end{abstract}

\medskip
\noindent\textbf{2012 ACM Subject Classification:} Theory of computation
$\rightarrow$ Graph algorithms analysis; Mathematics of computing $\rightarrow$
Graph theory.

\section{Introduction}\label{sec:introduction}

Covering and packing are fundamental problems in graph theory and algorithms~\cite{cornuejols2001combinatorial}.  We study two dual covering and packing problems called \emph{broadcast domination} and \emph{multipacking}. The broadcast domination problem is motivated by telecommunication networks. Imagine a network with radio towers that can transmit information within a certain radius 
 $r$ for a cost of $r$. The goal is to cover the entire network while minimizing the total cost. The multipacking problem is its natural packing counterpart and generalizes various other standard packing problems. Unlike many standard packing and covering problems, these two problems involve arbitrary distances in graphs, which makes them challenging. The goal of this paper is to study the relation between these two parameters in general graphs.

For a graph $G=(V,E)$, $d_G(u,v)$ is the length of a shortest path joining two vertices $u$ and $v$ in  $G$, and we simply write $d(u,v)$ when there is no confusion. Let $N_r[v]:=\{u\in V:d(v,u)\leq r\}$, i.e. a ball of radius $r$ around $v$.  The \textit{eccentricity} $e(w)$  of a vertex $w$ is $\min \{r:N_r[w]=V\}$. The \textit{radius} of the graph $G$ is $\min\{e(w):w\in V\}$, denoted by $\rad(G)$.   The \textit{diameter} of the graph $G$ is $\max\{d(u,v):u,v\in V\}$, denoted by $\mathrm{diam}(G)$. A shortest path of length $\diam(G)$ is called a \textit{diametral path} of $G$.

The covering problem we study is broadcast domination. For a graph $ G = (V, E) $ with  vertex set $ V $,  edge set $ E $ and diameter $\diam(G)$, a function
	$ f : V \rightarrow \{0, 1, 2, . . . , \diam(G)\} $ is called a \textit{broadcast} on $ G $. Suppose $G$ is a graph with a broadcast $f$. For each
	vertex $ u \in V  $, if  there exists a vertex $ v $ in $ G $ (possibly, $ u = v $) such that $ f (v) > 0 $ and
	$ d(u, v) \leq f (v) $, then $ f $ is called a \textit{dominating broadcast} on $ G $.
Its \textit{cost} is $\sigma(f)=  \sum_{v\in V}f(v)$. The minimum cost of a dominating broadcast in $G$ (taken over all dominating broadcasts)  is the \textit{broadcast domination number} of G, denoted by $ \gamma_{b}(G) $.  So, $ \gamma_{b}(G) = \min_{f\in D(G)} \sigma(f)$, where $D(G)$ is the set of all dominating broadcasts on $G$. We follow the convention $\gammab(K_1)=1$ for the one-vertex graph.

An \textit{optimal broadcast} or \textit{minimum dominating broadcast} on a graph $G$ is a dominating broadcast with a cost equal to $ \gamma_{b}(G) $.	Define a ball of radius $r$ around $v$ by $N_r[v]=\{u\in V(G):d(v,u)\leq r\}$.  Suppose $V(G)=\{v_1,v_2,v_3,\dots,v_n\}$. Let $c$ and $x$ be the vectors indexed by $(i,k)$ where $v_i\in V(G)$ and $1\leq k\leq \diam(G)$, with the  entries $c_{i,k}=k$ and $x_{i,k}=1$ when $f(v_i)=k$ and $x_{i,k}=0$ when $f(v_i)\neq k$. Let $A=[a_{j,(i,k)}]$ be a matrix with the entries 
\begin{center}	$a_{j,(i,k)}=
    \begin{cases}
        1 & \text{if }  v_j\in N_k[v_i]\\
        0 & \text{otherwise. }
    \end{cases} $
 \end{center}

	Hence, the broadcast domination number can be expressed as an integer linear program:  
 \begin{center}
	    $\gamma_b(G)=\min \{c\cdot x :  Ax\geq \mathbf{1}, x_{i,k}\in \{0,1\}\}.	$
     \end{center}

 The \textit{maximum multipacking problem} is the dual integer program of the above problem. In 2013, Brewster, Mynhardt, and Teshima \cite{brewster2013new} formally defined  multipacking.    A \textit{multipacking} is a set $ M \subseteq V  $ in a
	graph $ G = (V, E) $ such that   $|N_r[v]\cap M|\leq r$ for each vertex $ v \in V(G) $ and for every integer $  r\geq 1 $. The \textit{multipacking number} of $ G $ is the maximum cardinality of a multipacking of $ G $ and it
	is denoted by $ \MP(G) $.   A \textit{maximum multipacking} is a multipacking $M$  of a graph $ G  $ such that	$|M|=\MP(G)$. If $M$ is a multipacking, we define   a vector $y$ with the entries $y_j=1$ when $v_j\in M$ and $y_j=0$ when $v_j\notin M$.  So,  \begin{center}
	    $\MP(G)=\max \{y\cdot\mathbf{1} :  yA\leq c, y_{j}\in \{0,1\}\}.$ 
\end{center}

\medskip
\noindent\textbf{Brief Survey:}
Broadcast domination was introduced by Erwin, while multipacking was
introduced by Teshima as its natural packing counterpart; see
\cite{erwin2004dominating,brewster2013new,beaudou2019broadcast}.  Hartnell and
Mynhardt proved that $\gammab(G)\leq3\MP(G)-2$ whenever
$\MP(G)\geq2$ \cite{hartnell2014difference}.  Beaudou, Brewster, and
Foucaud later improved this to $\gammab(G)\leq2\MP(G)+3$ and conjectured
that $\gammab(G)\leq2\MP(G)$ for every graph
\cite{beaudou2019broadcast}.  The conjectured coefficient is the best possible, because a recent work gives a family of hypercubes for which
$\gammab(G)/\MP(G)$ tends to $2$ for arbitrarily large $\MP(G)$
\cite{rajendraprasad2025multipacking}.  From an algorithmic viewpoint, Minimum Dominating Broadcast can be solved in polynomial time~\cite{heggernes2006optimal}. Whereas, 
 Multipacking problem is NP-complete~\cite{das2025hardnessMultipackingESA} and there is a $(2+o(1))$-approximation algorithm for general graphs~\cite{beaudou2019broadcast}. Polynomial-time algorithms are known for trees and, more generally, for strongly chordal graphs~\cite{brewster2019broadcast}. Approximation algorithms with ratio $(\tfrac{3}{2}+o(1))$ have been obtained for chordal graphs~\cite{das2023relation,das2026relationDAM} and cactus graphs~\cite{das2025multipacking}.

\medskip
\noindent\textbf{Our Contribution:}
First, we strengthen the previously known relation
$\MP(G)\geq(\rad(G)-3)/2$ between the multipacking number and the radius of
a connected graph~\cite{beaudou2019broadcast}.

\begin{restatable}{theorem}{radiusmultipackingbound}
\label{thm:radius-multipacking}
Let $G$ be a connected graph with radius $r$. Then
$\MP(G)\geq\floor{r/2}$.
\end{restatable}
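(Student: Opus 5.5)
The plan is to build the multipacking explicitly as a sequence $x_1,\dots,x_k$ with $k=\floor{r/2}$ that satisfies a pairwise-distance condition strong enough to imply the multipacking property. \textbf{Sufficient condition.} If
\[
d(x_i,x_j)\ \geq\ 2|i-j|+1 \quad\text{for all } i\neq j,
\]
then $M=\{x_1,\dots,x_k\}$ is a multipacking. Indeed, suppose some ball $N_s[v]$ contained $s+1$ points of $M$. Their indices would span a range of length at least $s$, so two of them would satisfy $d(x_i,x_j)\geq 2s+1$. But any two vertices of $N_s[v]$ are at distance at most $2s$, a contradiction. This check is routine, and it reduces the theorem to producing $\floor{r/2}$ vertices satisfying the displayed inequality.

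\textbf{Construction via nested balls.} I would use the radius only through one fact: no ball of radius $r-1$ covers $V$. Fix a vertex $a$, and let $P$ be a shortest path from $a$ to a vertex $b$ with $d(a,b)=e(a)\geq r$. The first attempt is to take the points on a single geodesic, placing $x_i$ on $P$ at distance $2i$ from a suitable anchor. This fails immediately: two points at distance $2$ lie in a common ball of radius $1$. The construction therefore has to exploit more than one direction out of a central region.

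The refined plan keeps every ball that needs to be avoided inside a single ball around one centre $c$. One places $x_1,x_2,\dots$ alternately on two geodesics leaving $c$, with $d(c,x_i)$ growing by about one per step on each side. Alternatively one chooses $x_j$ outside $\bigcup_{i<j}N_{2(j-i)}[x_i]$ and arranges that this union lies inside $N_{R_j}[c]$ with $R_j\leq r-1$. The non-covering property of radius-$(r-1)$ balls then guarantees that such an $x_j$ exists, for every $j\leq\floor{r/2}$.

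\textbf{Main obstacle.} The main step is keeping the radius bookkeeping within budget. The triangle inequality only bounds $d(c,x_i)$ from above, so a naive greedy choice makes the required radius grow like a sum of the previous radii, which gives only about $r/3$ points. This matches the diametral-path bound $\ceil{(\diam(G)+1)/3}$. To reach $r/2$, I would first try the following recursive scheme.
\begin{enumerate}
\item Choose a farthest vertex $b$ from the current centre $c$.
\item Take $x$ to be the vertex at distance about $2$ from $c$ towards $b$, in alternate directions.
\item Shrink the problem to the subgraph of eccentricity at least $r-2$ relative to a new centre.
\end{enumerate}
This amounts to an induction on $r$ that loses $2$ in the radius for each point added. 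The difficulty is that the multipacking property is not hereditary to subgraphs, so each new point's compatibility with all earlier points must be checked through the distance condition in $G$ itself.
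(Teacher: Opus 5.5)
\textbf{Genuine gap: the reduction targets an unattainable condition.} You reduce the theorem to finding an ordered sequence $x_1,\ldots,x_k$ with $d(x_i,x_j)\geq 2|i-j|+1$ for all pairs. This condition is strictly stronger than needed, and in general it cannot be met with $\lfloor r/2\rfloor$ points. So the nested-ball, alternating-geodesic and recursive schemes all fail as stated, not just their radius bookkeeping. None of them is carried out either, and you note yourself that the greedy version gives only about $r/3$ points.

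A concrete counterexample is $C_{20}$, which has radius $10$ and so needs five points. Identify the vertices with $\mathbb{Z}_{20}$ and put $x_1=0$. Then $x_5\in\{9,10,11\}$, and by the reflection $p\mapsto -p$ you may assume $x_5\in\{9,10\}$.
\begin{itemize}
\item If $x_5=10$, the constraints give $x_2\in\{3,17\}$, $x_3\in\{5,15\}$ and $x_4\in\{7,13\}$. Taking $x_3=5$ forces $x_2=17$ and $x_4=13$, but then $d(x_2,x_4)=4<5$. The choice $x_3=15$ fails symmetrically.
\item If $x_5=9$, then $x_2\in\{16,17\}$ and $x_4\in\{12,13\}$. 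The requirement $d(x_2,x_4)\geq5$ forces $x_2=17$ and $x_4=12$. Then $x_3\in\{14,15\}$ lies within distance $2$ of $x_4$ or $x_2$.
\end{itemize}
Yet $\{0,3,6,9,12,15\}$ is a multipacking of $C_{20}$. So no five-point subset of $C_{20}$ admits your ordering, even though the theorem holds there.

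\textbf{What the paper uses instead.} There are two missing ingredients.
\begin{enumerate}
\item A weaker sufficient condition (the paper's Lemma~\ref{lem:criterion}): it is enough that every $U\subseteq M$ with $|U|\geq 2$ contains \emph{some} pair at distance at least $2|U|-1$. No global ordering is required, and the extremal pair may change with $U$.
\item A T-shaped configuration that exploits this condition. Take an isometric path $P=(x_0,\ldots,x_r)$ and let $h=3\lfloor r/6\rfloor$. Since $e(x_h)\geq r$, there is an isometric path $Q=(z_0,\ldots,z_r)$ with $z_0=x_h$. Select every third vertex of $P$, about $r/3$ vertices. Also select every third vertex of $Q$ counted back from $z_r$, but only at indices exceeding $\max|i-h|$ over the selected $P$-indices; this gives about $r/6$ vertices.
\end{enumerate}
Spacing $3$ rather than $2$ handles subsets lying on one path. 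For mixed subsets, the triangle-inequality bound $d(x_i,z_j)\geq j-|i-h|$ forces the $P$-vertices of $U$ to lie on both sides of $h$, which spreads them along $P$. A case analysis on $r \bmod 6$ then finishes. The result is $\lceil r/2\rceil$ vertices, or $(r-1)/2$ when $r\equiv5\pmod 6$, hence at least $\lfloor r/2\rfloor$.

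For $C_{20}$ this construction yields $\{0,3,6,9,13\}$. By the analysis above, this set admits no ordering of your kind, which shows why the subset criterion is essential.
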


Next, we prove the conjecture of Beaudou, Brewster, and
Foucaud~\cite{beaudou2019broadcast} and improve their bound $\gammab(G)\leq2\MP(G)+3$. We use a two-path structure similar to
the one used in their work, but modify the selection of the multipacking
vertices. For the only case not resolved by this modified construction, we
use a ball-cover argument to find an additional vertex that can be added to
the multipacking.

\begin{restatable}{theorem}{mainbroadcastbound}
\label{thm:main-broadcast-bound}
For every graph $G$, $\gammab(G)\leq2\MP(G)$.
\end{restatable}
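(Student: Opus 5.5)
Since $\gammab(G)\leq\rad(G)$ (one vertex of minimum eccentricity broadcasting with strength $\rad(G)$ dominates $G$), it suffices to show $\MP(G)\geq\ceil{\rad(G)/2}$. We may assume $G$ is connected; for a disconnected graph both parameters are additive over components, under the natural componentwise reading of the definitions. Theorem~\ref{thm:radius-multipacking} already gives $\MP(G)\geq\floor{r/2}$ with $r=\rad(G)$. This settles the case $r$ even, since then $\gammab(G)\leq r=2\floor{r/2}\leq 2\MP(G)$. The whole difficulty is the case $r=2k+1$ odd. There we must produce a multipacking of size $k+1$, since otherwise we only get $\gammab\leq 2\MP+1$.

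For odd $r$ I would rerun the construction behind Theorem~\ref{thm:radius-multipacking}. Take a vertex $u$ and a vertex $v$ at distance at least $r$ from it, and consider two shortest paths, following the two-path structure of Beaudou--Brewster--Foucaud. Select every third vertex along a geodesic: on a shortest path $P=x_0x_1\dots x_\ell$, the set $\{x_0,x_3,x_6,\dots\}$ is a multipacking of $G$ provided distances along $P$ are genuine graph distances. Indeed, a ball of radius $s$ meets $P$ in a set of diameter at most $2s$, hence in at most $\floor{2s/3}+1\leq s$ selected vertices. A geodesic of length $\ell$ gives $\floor{\ell/3}+1$ vertices. Using a diametral path and $\diam\geq r$ only yields about $r/3$, which is why two paths are needed. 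I would take $P_1$ from $u$ to some $v$ and $P_2$ from $u$ to some $w$ in "opposite directions," chosen so that any ball meeting both paths must have large radius. Selecting vertices at spacing $3$ on both paths, offset from $u$, then gives about $r/2$ vertices. The modification of the selection (which vertices near $u$ and near the ends are taken) is tuned so that, when $r$ is odd, we get exactly $k$ vertices with some slack left over.

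The main obstacle is the remaining case, where the tuned selection yields only $k$ vertices and no local change of the selection works. Here I would use a ball-cover argument. Suppose no vertex $z$ can be added to the current multipacking $M$, $|M|=k$. Then every $z$ lies in a witness ball: there are $c_z$ and $s_z$ such that $N_{s_z}[c_z]$ already contains $s_z$ vertices of $M$ and contains $z$. I would pick a minimal family of these saturated balls covering $V$ and turn it into a dominating broadcast $f$ with $f(c)=s$ on each chosen ball. Then I would bound its cost by $\sum s\leq|M|$ plus a small overlap correction, using that $M$ is spread along the two geodesics, so distinct saturated balls share few $M$-vertices. This gives a dominating broadcast of cost less than $r$, or allows the balls to be merged into one ball of radius at most $r-1$ centred near $u$. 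Either conclusion contradicts $\rad(G)=r$ together with $\gammab(G)=\rad(G)$, or directly contradicts the definition of the radius. The delicate point, which I expect to take most of the work, is controlling the overlaps so that the total radius of the merged cover is strictly below $r$ rather than at most $r$.

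Finally, everything is constructive. The paths come from BFS, and the augmenting vertex is found by checking all candidates $z$ against all balls. So the argument produces in polynomial time a multipacking $M$ with $|M|\geq\ceil{\rad(G)/2}\geq\gammab(G)/2$, which is the claimed $2$-approximation.
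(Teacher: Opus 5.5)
Your reduction aims at a false statement. It is not true that $\MP(G)\geq\ceil{\rad(G)/2}$ for every connected $G$.

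\textbf{Counterexample.} The hypercube $Q_5$ has $\rad(Q_5)=5$ but $\MP(Q_5)=2$. Three vertices of a multipacking must be pairwise at distance at least $3$. In $Q_5$, the three pairwise distances of $a,b,c$ sum to twice the number of coordinates on which $a,b,c$ are not all equal. That sum is even, at least $9$ and at most $10$, so it is exactly $10$ and the distances are $3,3,4$. The coordinatewise majority vertex $m$ is then at distances $1,2,2$ from $a,b,c$, so $\ball{m}{2}$ contains all three. (Consistently, $\gammab(Q_5)=4$.)

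So ``there we must produce a multipacking of size $k+1$'' cannot be done for all odd $r$. Your algorithmic guarantee $|M|\geq\ceil{\rad(G)/2}$ fails for the same reason. The paper's algorithm instead returns $M_r$ alone when its test balls cover $G$, and the cover itself certifies $\gammab\leq2|M|$. The case split must be on $\gammab$, not on the parity of $r$. If $\gammab(G)\leq r-1$, then $\floor{r/2}\geq(r-1)/2\geq\gammab(G)/2$ and you are done. Only when $\gammab(G)=\rad(G)$ is an extra vertex needed, and that hypothesis is exactly what produces it. Your last paragraph appeals to $\gammab(G)=\rad(G)$, but your setup never assumes it, and in $Q_5$ there is nothing to contradict. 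Also, $M_R$ already has $\ceil{r/2}$ vertices whenever $r\not\equiv5\pmod6$, so the only hard case is $r\equiv5\pmod6$ with $\gammab=r$.

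\textbf{The augmentation step is missing.} This step is the real content of the theorem, and your proposal does not supply it. Covering $V$ by saturated witness balls and bounding $\sum s$ by ``$|M|$ plus a small overlap correction'' has no mechanism behind it. An $M$-vertex can lie in many saturated balls, and the correction can genuinely be as large as $|M|$: in $Q_5$ every $2$-element multipacking is maximal, yet every dominating broadcast costs at least $4=2|M|$. So the bound you need, $\sum s\leq2|M|$, is essentially the theorem restated.

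\textbf{What the paper does instead.} It never uses arbitrary witness balls.
\begin{itemize}
\item $Q$ starts at the interior vertex $c=x_{3k}$ of $P$, using $e(c)\geq R$.
\item The paper fixes in advance the balls $\ball{x_{3k+3}}{3k+5}$ and $\ball{z_R}{3k-1}$, whose radii sum to $R-1$.
\item Since $\gammab(G)=R$, some vertex $b$ lies outside both. The triangle inequality then gives $d(b,X_i)\geq3k+6-3|i-(k+1)|$ and $d(b,Z_j)\geq3k-3j$.
\item A separate slack lemma says every $T\subseteq M_R$ with $|T|=t\geq4$ has two vertices at distance at least $2t+1$. Hence no ball of radius $r\geq4$ contains $r$ vertices of $M_R$.
\item So adding $b$ only needs checking for radii $1,2,3$, where the distance bounds above suffice.
\end{itemize}
Your proposal lacks these two ingredients: test balls of total cost $R-1$ whose complement is automatically far from $M_R$, and the slack lemma for large balls.
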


Finally, the multipacking construction used to prove the conjecture is
algorithmic. It yields a polynomial-time $2$-approximation algorithm for
Maximum Multipacking. This improves the earlier approximation factor
$2+o(1)$~\cite{beaudou2019broadcast}.

\begin{restatable}{theorem}{multipackingapproximation}
\label{thm:multipacking-approximation}
There is a polynomial-time $2$-approximation algorithm for Maximum
Multipacking.
\end{restatable}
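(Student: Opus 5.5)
The plan is to read the algorithm off the constructive proof of Theorem~\ref{thm:main-broadcast-bound}, and to certify the ratio by weak duality. The key observation is that the proof of Theorem~\ref{thm:main-broadcast-bound} does not merely compare the two numbers. For each connected graph $G$ it exhibits an explicit multipacking $M$ together with an explicit dominating broadcast $f$ such that $\sigma(f)\le 2|M|$. Since every multipacking satisfies $|M'|\le\MP(G)\le\gammab(G)\le\sigma(f)$, this gives
\[
\MP(G)\le\gammab(G)\le\sigma(f)\le 2|M|.
\]
So the set $M$ output by the construction is a $2$-approximation of a maximum multipacking. It therefore suffices to check that every step of that construction runs in polynomial time. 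If the construction naturally bounds $\gammab(G)$ rather than $\sigma(f)$, nothing is lost: $\gammab(G)$ itself is computable in polynomial time~\cite{heggernes2006optimal}, although for the approximation guarantee we only need $M$.

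The steps, in order, are as follows.

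\begin{enumerate}[label=(\roman*)]
\item Compute all pairwise distances by BFS from every vertex, in $O(n(n+m))$ time. From these we get eccentricities, $\rad(G)$, $\diam(G)$, centers and diametral paths.
\item Carry out the two-path selection used in the proof of Theorem~\ref{thm:main-broadcast-bound}. Each choice there is of the form ``a vertex or shortest path minimizing or maximizing some distance quantity''. Such a choice can be found by exhaustive search over $O(n^2)$ vertex pairs, and the corresponding shortest paths are recovered from the BFS trees. The multipacking vertices are then selected at prescribed spacings along these paths, which takes linear time per path.
\item In the exceptional case, apply the ball-cover argument to locate an extra vertex. The argument only asserts that some vertex lies outside a union of $O(n)$ explicitly described balls. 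We can therefore test every vertex against every ball in $O(n^2)$ time and take any witness.
\item As a safeguard, verify that the output is a multipacking. This means checking $|N_r[v]\cap M|\le r$ for all $v$ and all $1\le r\le\diam(G)$, which costs $O(n^2)$ using the distance matrix.
\end{enumerate}

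For a disconnected graph, run the procedure on each component $G_i$ and take the union of the resulting sets. Vertices in different components are at infinite distance, so a union of multipackings of the components is a multipacking of $G$, and $\MP(G)=\sum_i\MP(G_i)$. Likewise $\gammab$ is additive over components. Hence the ratio $2$ is preserved.

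I expect the main obstacle to be step~(iii), together with the bookkeeping in step~(ii). We must make sure that the existence arguments in the proof of Theorem~\ref{thm:main-broadcast-bound} are genuinely constructive, and that none of them depends on choosing an extremal object from an exponentially large family, such as ``a longest path with property P''. My first approach would be to rephrase each such extremal choice in terms of distances between pairs of vertices only, so that it ranges over polynomially many candidates. Where the proof instead argues by contradiction that a suitable vertex exists, the exhaustive test in step~(iii) will find one, because the proof guarantees that one exists.
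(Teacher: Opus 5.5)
Your proposal is correct and follows the same overall plan as the paper. You run the construction behind Theorem~\ref{thm:main-broadcast-bound} (two isometric paths, the set $M_R$, possibly one extra vertex), check that each step is BFS-based and polynomial, and extend to disconnected graphs componentwise. The one real divergence is how you handle the non-constructive step.

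The case split in that proof is on the value of $\gammab(G)$. The extra vertex $b$ is only guaranteed to exist when $\rad(G)\equiv5\pmod6$ and $\gammab(G)=\rad(G)$. So your remark that the exhaustive search ``will find one because the proof guarantees one exists'' is valid only once you know you are in that case. In your version, that means actually calling the polynomial algorithm of~\cite{heggernes2006optimal} to compute $\gammab(G)$. You do mention this, and it closes the gap.

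The paper avoids computing $\gammab(G)$ altogether. When $\rad(G)\equiv5\pmod6$, it simply tests whether the two balls $\ball{x_{3k+3}}{3k+5}$ and $\ball{z_R}{3k-1}$ cover $G$ (or $\ball{u}{2}$ and $\ball{v}{2}$ when $k=0$).
\begin{itemize}
\item If they cover $G$, they are themselves a dominating broadcast of cost exactly $2|M|$ for the current set $M$ ($M_R$, or $\{u,v\}$ when $k=0$), so $M$ is already within factor $2$.
\item If they do not, any uncovered $b$ works. The proof that $M_R\cup\{b\}$ is a multipacking uses only the distance bounds~\eqref{eq:b-main}, not the hypothesis $\gammab(G)=\rad(G)$.
\end{itemize}
This turns the contradiction argument into a self-certifying dichotomy. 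It realizes literally your opening claim that the output comes with an explicit broadcast of cost at most $2|M|$: either the central broadcast of cost $\rad(G)$, or the two balls. By contrast, the proof of Theorem~\ref{thm:main-broadcast-bound} as written only bounds $\gammab(G)$ numerically in the case $\gammab(G)\le\rad(G)-1$.

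Your route is valid but depends on the heavier external algorithm for $\gammab$. The paper's route is self-contained and runs in $O(nm)$ time.
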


\medskip
\noindent\textbf{Organisation:}
In Section~\ref{sec:preliminaries}, we present the definitions and basic
results used throughout the paper. In Section~\ref{sec:radius-bound}, we
establish the relation between the multipacking number and the radius. In
Section~\ref{sec:broadcast-multipacking}, we prove the main bound relating
broadcast domination and multipacking. In Section~\ref{sec:approximation},
we present the resulting approximation algorithm for Maximum Multipacking.
Finally, we conclude in Section~\ref{sec:conclusion}.

\section{Preliminaries}\label{sec:preliminaries}

All graphs are finite, simple, and undirected.  Unless stated otherwise, they
are connected and have at least two vertices.   A path
$P=(v_0,v_1,\ldots,v_\ell)$ is called \emph{isometric} if
$d(v_i,v_j)=|i-j|$ for all $0\leq i,j\leq\ell$.  Equivalently, an isometric path is a
shortest path between its end vertices.    

It is known that for every connected graph $G$, $\MP(G)\leq \gamma_b(G)\leq\rad(G)$~\cite{brewster2013new,erwin2001cost,erwin2004dominating}. The following useful criterion is due to Beaudou, Brewster, and Foucaud
\cite{beaudou2019broadcast}.

\begin{lemma}[\cite{beaudou2019broadcast}]\label{lem:criterion}
Let $M\subseteq V(G)$.  If, for every subset $U\subseteq M$ with $|U|\geq2$, there exists two vertices $x,y\in U$ satisfies $d(x,y)\geq2|U|-1$,
then $M$ is a multipacking of $G$.
\end{lemma}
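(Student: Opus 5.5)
The plan is to argue by contradiction, using nothing beyond the triangle inequality. Suppose $M$ satisfies the hypothesis of Lemma~\ref{lem:criterion} but is not a multipacking. By the definition of multipacking, there is then a vertex $v\in V(G)$ and an integer $r\geq1$ with $|N_r[v]\cap M|\geq r+1$.

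Next I will pick any $r+1$ vertices of $N_r[v]\cap M$ and call this set $U$. Since $r\geq1$, we have $|U|=r+1\geq2$, so the hypothesis applies to $U$. It gives two vertices $x,y\in U$ with
\[
d(x,y)\geq 2|U|-1=2(r+1)-1=2r+1 .
\]

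On the other hand, both $x$ and $y$ lie in the ball $N_r[v]$. The triangle inequality through the centre $v$ therefore gives
\[
d(x,y)\leq d(x,v)+d(v,y)\leq r+r=2r .
\]
This contradicts $d(x,y)\geq 2r+1$. Hence every ball $N_r[v]$ with $r\geq1$ contains at most $r$ vertices of $M$, so $M$ is a multipacking.

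I do not expect a real obstacle. The only point needing care is choosing $U$ to have exactly $r+1$ elements rather than all of $N_r[v]\cap M$. With that choice, the bound $2|U|-1$ from the hypothesis equals $2r+1$, which is exactly one more than the diameter bound $2r$ of a radius-$r$ ball. Taking all of $N_r[v]\cap M$ would also work, since a larger $U$ only makes the required distance larger, but fixing $|U|=r+1$ keeps the arithmetic clean.
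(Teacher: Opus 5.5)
Your proof is correct and follows essentially the same contradiction argument the paper gives: a ball $N_r[v]$ containing at least $r+1$ vertices of $M$ yields a set $U$ whose pairwise distances are at most $2r\leq 2|U|-2$, which violates the hypothesis. The only difference is cosmetic: you fix $|U|=r+1$ exactly, whereas the paper allows $|U|\geq r+1$. Both choices work, as you yourself note.
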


This can be proved by contradiction.  If $M$ is not a multipacking, some ball
$\ball{z}{r}$ contains a set $U\subseteq M$ of at least $r+1$ vertices.  Every
two vertices in $U$ are at distance at most $2r\leq2|U|-2$, so $U$ fails to satisfy
the stated property.

\section{Relation Between Broadcast Domination Number and Radius
(Proof of Theorem~\ref{thm:radius-multipacking})}
\label{sec:radius-bound}

\begin{figure}[t]
    \centering
    \includegraphics[width=.92\textwidth]
    {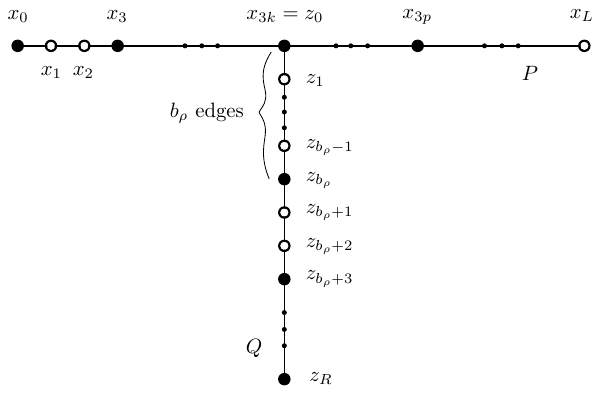}
    \caption{The horizontal path is an isometric path $P=(x_0,\ldots,x_L)$, and
    $Q=(z_0,\ldots,z_R)$ is an isometric path with initial vertex
    $z_0=x_{3k}$. The black disk vertices represent multipacking on these paths.}
    \label{fig:Tstructure}
\end{figure}

Let $P=(x_0,x_1,\ldots,x_L)$ and $Q=(z_0,z_1,\ldots,z_R)$ be
isometric paths in a graph $G$ such that $z_0=x_h$; see Figure~\ref{fig:Tstructure}. Then, for
all indices $i$ and $j$,
\begin{equation}\label{eq:cross-basic}
    d(x_i,z_j)\geq j-|i-h|.
\end{equation}

This is true because, by the triangle inequality, we have
$d(x_h,z_j)\leq d(x_h,x_i)+d(x_i,z_j)$, where
$d(x_h,z_j)=d(z_0,z_j)=j$ and
$d(x_h,x_i)=|i-h|$.

\begin{lemma}\label{lem:Tstructure}
Let $R\geq1$, 
$k=\floor{R/6}$, $p=\floor{R/3}$, and
$\ell=\floor{(R-1)/6}$.  Let $G$ be a graph that contains isometric paths
$P=(x_0,\ldots,x_L)$ and $Q=(z_0,\ldots,z_R)$, where $L\geq R$ and
$z_0=x_{3k}$.  Then
\begin{equation}\label{eq:T-set}
 M_R=\{x_{3i}:0\leq i\leq p\}
     \cup\{z_{R-3j}:0\leq j<\ell\}
\end{equation}
is a multipacking in $G$.  Moreover,
$$
 |M_R|=
 \begin{cases}
  \ceil{R/2},&R\not\equiv5\pmod6,\\
  (R-1)/2,&R\equiv5\pmod6.
 \end{cases}
$$

\end{lemma}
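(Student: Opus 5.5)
We plan to verify the multipacking property with Lemma~\ref{lem:criterion}, taking the cardinality count separately at the end. Write $X=\{x_{3i}:0\le i\le p\}$ and $Z=\{z_{R-3j}:0\le j<\ell\}$.

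\textbf{Step 1: $X$ and $Z$ are disjoint.} The $Z$-indices lie in $[R-3\ell+3,R]$. Since $3\ell\le (R-1)/2$, every such index $t$ satisfies $t\ge R/2+3$. The $X$-indices $i$ satisfy $0\le i\le 3p\le R$. Since $3k\approx R/2$, we have $|i-3k|\le \max(3k,3p-3k)$, which is roughly $R/2$. Then \eqref{eq:cross-basic} gives $d(x_i,z_t)\ge t-|i-3k|>0$. We will check the exact inequality residue by residue.

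\textbf{Step 2: apply the criterion to $U\subseteq X\cup Z$ with $|U|=m\ge 2$.} Let $a=|U\cap X|$ and $b=|U\cap Z|$, so $a+b=m$.
\begin{itemize}
\item If $b=0$, the two extreme vertices of $U$ on the isometric path $P$ are at distance at least $3(m-1)\ge 2m-1$.
\item If $a=0$, the same argument on $Q$ gives the same bound.
\item If $a,b\ge1$, we take $z_t\in U$ with the largest index $t$ and $x_{i^*}\in U\cap X$ minimizing $\delta=|i^*-3k|$. Since the $b$ $Z$-indices are distinct and spaced by $3$ within $[R-3\ell+3,R]$, we get $t\ge R-3\ell+3b$. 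Then \eqref{eq:cross-basic} gives $d(x_{i^*},z_t)\ge R-3\ell+3b-\delta$.
\end{itemize}

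\textbf{Step 3: handle a large offset $\delta$.} The bound from Step 2 is weak when $\delta$ is large. In that case all $a$ vertices of $U\cap X$ lie at offset at least $\delta$ from $x_{3k}$. We split into two subcases.
\begin{itemize}
\item If they all lie on one side of $x_{3k}$, the farthest one, $x_{i'}$, has offset at least $\delta+3(a-1)$. This is still a lower-bound problem, so we instead combine the $X$-spread bound $3(a-1)$ with the cross bound.
\item If they lie on both sides, the $X$-spread alone is at least $2\delta+3(a-2)$.
\end{itemize}
So the key inequality to establish is
\[
\max\bigl\{3(a-1),\ R-3\ell+3b-\delta,\ \text{spread of }U\cap X\bigr\}\ge 2(a+b)-1 .
\]
Since $a\le p+1$ bounds $\delta$ in terms of how many $X$-points fit within offset $\delta$, we can reduce this to the linear inequality $R-3\ell+3b-\delta\ge 2a+2b-1$ whenever $3(a-1)<2m-1$. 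The latter condition means $a$ is small relative to $b$, roughly $a<2b+2$, and then $\delta$ is automatically at most about $3k$.

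\textbf{Main obstacle and final count.} The hard part will be this mixed case. Its tightness depends on $R \bmod 6$ through $k$, $p$ and $\ell$. This is exactly why $\ell=\floor{(R-1)/6}$ rather than $\floor{R/6}$, and why $R\equiv5\pmod 6$ loses one vertex. I will therefore write $R=6q+s$ with $s\in\{0,\dots,5\}$ and verify the inequality explicitly in each of the six cases.

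The cardinality then follows by direct counting, since $|M_R|=p+1+\ell$:
\begin{itemize}
\item for $R=6q$: $2q+1+(q-1)=3q=R/2$;
\item for $R=6q+5$: $(2q+1)+1+q=3q+2=(R-1)/2$;
\item the remaining residues give $\ceil{R/2}$ similarly.
\end{itemize}
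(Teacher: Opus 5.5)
\textbf{Verdict: genuine gap.} Your framework is the paper's: Lemma~\ref{lem:criterion}, disjointness from \eqref{eq:cross-basic}, and the spacing bound $3(m-1)\ge 2m-1$ for the pure cases. In the mixed case you also pair the $X$-vertex of least offset $\delta$ from $x_{3k}$ with the $Z$-vertex of largest index, and split by whether $U\cap X$ lies on one or both sides of $x_{3k}$. Your cardinality count is correct.

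However, the mixed case is the whole content of the lemma, and you do not carry it out. Moreover, the one concrete reduction you state is false. You claim that whenever $3(a-1)<2m-1$, the cross bound alone gives $R-3\ell+3b-\delta\ge 2m-1$. Take $R=6k+1$ with $k\ge1$ and $U=\{x_0,x_{6k},z_{3k+4}\}\subseteq M_R$. Then $a=2$, $b=1$, $3(a-1)=3<5=2m-1$, and $\delta=3k$, so the cross bound is only $(3k+4)-3k=4$. Only the two-sided spread $d(x_0,x_{6k})=6k$ rescues this set. Saying that $\delta$ is ``at most about $3k$'' gives nothing, since $\delta\le 3k+3$ always, and that is exactly where the cross bound is weakest.

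What is missing is how the bounds interact.

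\emph{One-sided case.} You need an upper bound on $\delta$, not a lower bound on the farthest offset. Every $X$-vertex has offset at most $D$, where $D=3k$ if $R\bmod 6\in\{0,1,2\}$ and $D=3k+3$ otherwise. So $a$ vertices on one side give $\delta\le D-3(a-1)$. If the cross pair is also at distance at most $2m-2$, i.e.\ $\delta\ge R-3\ell+b-2a+2$, you get $a+b\le D+1-(R-3\ell)\le 1$, a contradiction.

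\emph{Two-sided case.} The two bounds must be combined rather than taken separately. Together, $\delta\ge R-3\ell+b-2a+2$ and $2\delta+3(a-2)\le 2m-2$ yield $3a\ge 2(R-3\ell)$. This exceeds $3|X|$ unless $R\equiv1$ or $3\pmod 6$, where it forces $X\subseteq U$. Even then an endgame remains:
\begin{itemize}
\item for $R\equiv3$, $d(x_0,x_{6k+3})\le 2m-2$ forces $b\ge k+1>|Z|$;
\item for $R\equiv1$, $d(x_0,x_{6k})\le 2m-2$ forces $U=M_R$, and then $d(z_0,z_R)=R=2|M_R|-1$ with $z_0=x_{3k}\in X$.
\end{itemize}
None of this is in your proposal, and ``verify in each of the six cases'' does not supply it.
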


\begin{proof}
Let $R=6k+\rho$, where $\rho\in\{0,1,2,3,4,5\}$, and  $h=3k$.
Let $A=\{x_{3i}:0\leq i\leq p\}$ and
$B=\{z_{R-3j}:0\leq j<\ell\}$,
therefore $M_R=A\cup B$.  The largest distance along $P$ from a vertex of $A$
to $x_h$ is
 $$D_\rho=\max\{|i-h|:x_i\in A\}=
 \begin{cases}
  3k,&\rho\in\{0,1,2\},\\
  3k+3,&\rho\in\{3,4,5\}.
 \end{cases}$$

When $B\neq\emptyset$, its smallest index is
$$ b_\rho=\min\{j:z_j\in B\}=
 \begin{cases}
  3k+6,&\rho=0,\\
  3k+\rho+3,&\rho\in\{1,2,3,4,5\}.
 \end{cases}$$

Therefore, $b_\rho>D_\rho$ for each $\rho\in\{0,1,2,3,4,5\}$.  Hence $A\cap B=\emptyset$.

Now $p=2k$ for $\rho\in\{0,1,2\}$ and $p=2k+1$ for
$\rho\in\{3,4,5\}$, while $\ell=k-1$ for $\rho=0$ and $\ell=k$ otherwise. Therefore,  $$
 |A|=p+1=
 \begin{cases}
  2k+1,&\rho\in\{0,1,2\},\\
  2k+2,&\rho\in\{3,4,5\}.
 \end{cases}
$$
and 

 $$
 |B|=\ell=
 \begin{cases}
  k-1,&\rho=0\\
  k,&\rho\in\{1,2,3,4,5\}.
 \end{cases}
$$
Consequently,
$$
 |M_R|=|A|+|B|=
 \begin{cases}
  3k,&\rho=0,\\
  3k+1,&\rho\in\{1,2\},\\
  3k+2,&\rho\in\{3,4,5\},
 \end{cases}
$$
which implies 

$$
 |M_R|=
 \begin{cases}
  \ceil{R/2},&R\not\equiv5\pmod6,\\
  (R-1)/2,&R\equiv5\pmod6.
 \end{cases}
$$

We prove $M_R$ is a multipacking of $G$ using Lemma~\ref{lem:criterion}.  Let $U\subseteq M_R$ with
$|U|=t\geq2$.  If $U\subseteq A$ or $U\subseteq B$, then the vertices of $U$
occur at indices differing by multiples of three on an isometric path.  Consequently, the vertices of $U$ having minimum and maximum
indices are at distance at least $3(t-1)\geq2t-1$.

Assume henceforth that $\alpha=|U\cap A|\geq1$,
$\beta=|U\cap B|\geq1$, and $t=\alpha+\beta$.  Let
$s=\min\{|i-h|:x_i\in U\cap A\}$.
Choose $i^*$ such that $x_{i^*}\in U\cap A$ and $|i^*-h|=s$, and let
$j^*=\max\{j:z_j\in U\cap B\}$.
Since the indices of the vertices of $B$ form an arithmetic progression with
common difference three and $|U\cap B|=\beta$, we have
$j^*\geq b_\rho+3(\beta-1)$.  By \eqref{eq:cross-basic},
\begin{equation}\label{eq:cross-lower}
        d(x_{i^*},z_{j^*})
        \geq b_\rho+3(\beta-1)-s.
\end{equation}
Suppose, for contradiction, we assume that every two vertices of $U$ are at distance
at most $2t-2$.  Comparing this upper bound with
\eqref{eq:cross-lower} gives
\begin{equation}\label{eq:s-lower}
        s\geq b_\rho+\beta-2\alpha-1.
\end{equation}

Let $I_A=\{i:x_i\in U\cap A\}$.
Assume first that either $I_A\subseteq\{0,\ldots,h\}$ or
$I_A\subseteq\{h,\ldots,L\}$.
Within either of the two index intervals, consecutive indices corresponding
to vertices of $A$ differ by three.  Hence, among any $\alpha$ such indices,
the minimum value of $|i-h|$ is at most $D_\rho-3(\alpha-1)$.
Combining this with \eqref{eq:s-lower} yields
$\alpha+\beta\leq D_\rho+4-b_\rho$.  For
$\rho=0,1,\ldots,5$, the quantity on the right is, respectively,
$-2,0,-1,1,0,-1$.  This contradicts $\alpha+\beta\geq2$.

It follows that there exist indices $i_-,i_+\in I_A$ satisfying
$i_-<h<i_+$.  Let $i_{\min}=\min I_A$ and $i_{\max}=\max I_A$.
We claim that
\begin{equation}\label{eq:index-difference}
        i_{\max}-i_{\min}\geq2s+3(\alpha-2).
\end{equation}
Let $\alpha_-$ and $\alpha_+$ denote the numbers of indices in $I_A$ smaller
and larger than $h$, respectively. Note that both $\alpha_-$ and $\alpha_+$ are positive.  If $h\notin I_A$, then
$\alpha_-+\alpha_+=\alpha$.    Therefore
$
        i_{\max}-i_{\min}
        \geq \bigl(s+3(\alpha_--1)\bigr)
              +\bigl(s+3(\alpha_+-1)\bigr)
        =2s+3(\alpha-2).
$
If $h\in I_A$, then $s=0$ and
$\alpha_-+\alpha_+=\alpha-1$.  Since both $\alpha_-$ and $\alpha_+$ are
positive,
$
        i_{\max}-i_{\min}
        \geq3\alpha_-+3\alpha_+
        =3(\alpha-1)
        \geq2s+3(\alpha-2).
$
This proves \eqref{eq:index-difference}.  

Since $P$ is isometric, we have
$d(x_{i_{\min}},x_{i_{\max}})=i_{\max}-i_{\min}$. Recall, we assumed that every two vertices of $U$ are at distance
at most $2t-2$.
This fact and \eqref{eq:s-lower}  yield $
  2(\alpha+\beta)-2
  \geq2s+3(\alpha-2)
  \geq2(b_\rho+\beta-2\alpha-1)+3\alpha-6,
$
which simplifies to
\begin{equation}\label{eq:alpha-lower}
        3\alpha\geq2b_\rho-6.
\end{equation}

For $\rho\in\{0,2,4,5\}$, \eqref{eq:alpha-lower} gives $
 \alpha\geq2k+2,
  \alpha\geq2k+2,
 \alpha\geq2k+3$, and 
 $\alpha\geq2k+4
$ respectively. Each bound exceeds $|A|$, so this is a contradiction.

Suppose $\rho=1$.  Then \eqref{eq:alpha-lower} implies
$\alpha=|A|=2k+1$, so $A\subseteq U$.  In particular,
$x_0,x_{6k}\in U$, and hence $
 6k=d(x_0,x_{6k})
 \leq2|U|-2=2(2k+1+\beta)-2.
$ Thus $\beta\geq k$.  Since $|B|=k$, we have $U=M_R$.  Moreover,
$z_0=x_{3k}\in A$ and $z_R\in B$, so
$d(z_0,z_R)=R=6k+1=2|M_R|-1=2|U|-1$,
which contradicts the assumption on $U$.

Suppose $\rho=3$.  Then \eqref{eq:alpha-lower} implies
$\alpha=|A|=2k+2$.  Hence $A\subseteq U$, and the two vertices
$x_0,x_{6k+3}\in U$ satisfy $d(x_0,x_{6k+3})=6k+3$.  Under the
contradiction assumption, $6k+3\leq2|U|-2=2(2k+2+\beta)-2$,
which implies $\beta\geq k+1>|B|$, a contradiction.

Every case is contradictory.  Hence, by Lemma~\ref{lem:criterion}, $M_R$ is
a multipacking of $G$.

\hfill $\square$
\end{proof}

\begin{lemma}\label{lem:radius-reduction}
Let $G$ be a connected graph of radius $r\geq1$.  Then
$$
 \MP(G)\geq
 \begin{cases}
  \ceil{r/2},&r\not\equiv5\pmod6,\\
  (r-1)/2,&r\equiv5\pmod6.
 \end{cases}
$$
\end{lemma}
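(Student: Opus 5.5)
The plan is to build inside $G$ exactly the configuration required by Lemma~\ref{lem:Tstructure} with $R=r$, and then read off the bound from the size formula for $M_R$. Since the formula in Lemma~\ref{lem:Tstructure} matches the claimed bound exactly, no further counting is needed once the two isometric paths are found.

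\emph{Step 1: the long path $P$.} I will take $P=(x_0,\ldots,x_L)$ to be a diametral path of $G$, so $L=\diam(G)$. This path is isometric because it is a shortest path between its end vertices. It is standard that $\diam(G)\geq\rad(G)=r$: the vertex $x_0$ has eccentricity at least $r$, and every eccentricity is at most $\diam(G)$. Hence $L\geq r=R$.

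\emph{Step 2: the branch $Q$.} Let $k=\floor{r/6}$ and $h=3k$. Since $3k\leq r/2\leq L$, the vertex $x_h$ exists on $P$. Every vertex has eccentricity at least $\rad(G)=r$, so there is a vertex $w$ with $d(x_h,w)\geq r$. I choose a shortest $x_h$--$w$ path and keep only its first $r+1$ vertices, naming them $Q=(z_0,\ldots,z_r)$ with $z_0=x_h$. Every subpath of a shortest path is a shortest path, so $Q$ is isometric of length exactly $r$. No disjointness between $P$ and $Q$ is needed, because Lemma~\ref{lem:Tstructure} only assumes that both paths are isometric and that $z_0=x_{3k}$.

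\emph{Step 3: conclusion.} All hypotheses of Lemma~\ref{lem:Tstructure} now hold with $R=r\geq1$. Hence $M_r$ is a multipacking of $G$, and its size is $\ceil{r/2}$ when $r\not\equiv5\pmod 6$ and $(r-1)/2$ otherwise. Therefore $\MP(G)\geq|M_r|$ gives the stated bound. I do not expect a genuine obstacle, since all the difficulty sits in Lemma~\ref{lem:Tstructure}. The only points to check carefully are that $L\geq R$ and that the index $3k$ lies in $\{0,\ldots,L\}$. Both follow from $\diam(G)\geq\rad(G)$ and $3\floor{r/6}\leq r$, and the latter also covers small $r$ such as $r<6$, where $k=0$.
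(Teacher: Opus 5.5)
Your proposal is correct and is essentially the paper's proof: fix $c=x_{3k}$ on an isometric path $P$, use the fact that the eccentricity of $c$ is at least $r$ to obtain an isometric path $Q$ of length $r$ starting at $c$, and apply Lemma~\ref{lem:Tstructure} with $R=r$. The only difference is that you take $P$ to be a full diametral path, so $L=\diam(G)$, whereas the paper takes $P$ of length exactly $r$; this is immaterial, since Lemma~\ref{lem:Tstructure} only requires $L\geq R$.
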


\begin{proof}
As $\diam(G)\geq r$, choose an isometric path $P=(x_0,\ldots,x_r)$ of
length $r$.  Let $k=\floor{r/6}$ and $c=x_{3k}$.
Every vertex has eccentricity at least $r$, so there is a vertex $w$ with
$d(c,w)\geq r$.  Take a subpath $Q$ of length $r$ of a shortest path from
$c$ to $w$. Let 
$Q=(z_0,\ldots,z_r)$ with $z_0=c$. $Q$ is also an isometric path.  Apply Lemma~\ref{lem:Tstructure} with
$R=L=r$.
\hfill $\square$
\end{proof}

Therefore, if $G$ is a connected graph with radius $r$, then  $\MP(G)\geq \floor{r/2}$, by Lemma~\ref{lem:radius-reduction}. This proves Theorem~\ref{thm:radius-multipacking}.

\begin{corollary}\label{cor:plus-one}
For every connected graph $G$, $\gammab(G)\leq2\MP(G)+1$.
Moreover, the stronger inequality
$\gammab(G)\leq2\MP(G)$ holds unless
$\rad(G)\equiv5\pmod6$ and $\gammab(G)=\rad(G)$.
\end{corollary}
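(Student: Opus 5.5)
The plan is to combine the known chain $\gammab(G)\leq\rad(G)$ with the lower bound on $\MP(G)$ from Lemma~\ref{lem:radius-reduction}. Put $r=\rad(G)$. The case $r=0$ is the one-vertex graph $K_1$; there $\gammab(K_1)=1$ by convention and $\MP(K_1)=1$ (a single vertex is trivially a multipacking), so both inequalities hold. From now on assume $r\geq1$, so that Lemma~\ref{lem:radius-reduction} applies.

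Suppose first that $r\not\equiv5\pmod6$. Lemma~\ref{lem:radius-reduction} gives $\MP(G)\geq\ceil{r/2}$, hence $2\MP(G)\geq r\geq\gammab(G)$. This already gives the stronger inequality $\gammab(G)\leq2\MP(G)$ in this case, and therefore also $\gammab(G)\leq 2\MP(G)+1$.

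Suppose next that $r\equiv5\pmod6$. Here $r$ is odd, and Lemma~\ref{lem:radius-reduction} gives $\MP(G)\geq(r-1)/2$, so $2\MP(G)+1\geq r\geq\gammab(G)$. This proves the general bound in every case. For the ``moreover'' part, assume in addition that $\gammab(G)\neq\rad(G)$. Since $\gammab(G)\leq\rad(G)$ always holds, we get $\gammab(G)\leq r-1\leq2\MP(G)$.

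So the only situation in which the stronger inequality is not established is $r\equiv5\pmod6$ together with $\gammab(G)=\rad(G)$, which is exactly the exceptional case named in the statement. The argument is a direct case split on the residue of $r$ modulo $6$. There is no real obstacle; the only care needed is the trivial graph $K_1$, which falls outside the hypothesis $r\geq1$ of Lemma~\ref{lem:radius-reduction}, and the integer rounding in the case $r\equiv5\pmod6$.
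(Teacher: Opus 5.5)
Your proof is correct and follows essentially the same route as the paper. You handle $K_1$ separately, then combine $\gammab(G)\leq\rad(G)$ with Lemma~\ref{lem:radius-reduction}, splitting on whether $r\equiv5\pmod6$ and, in that case, on whether $\gammab(G)\leq r-1$; the only cosmetic difference is that the paper cites $\MP(G)\geq\floor{r/2}$ from Theorem~\ref{thm:radius-multipacking} in the last step, where you use the equivalent bound $\MP(G)\geq(r-1)/2$ directly.
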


\begin{proof}
The result is immediate for $G=K_1$, so assume that $G$ has at least two
vertices.  Let $r=\rad(G)$.   Since $\ceil{r/2}\geq (r-1)/2$, we have $\MP(G)\geq \frac{r-1}{2}\geq \frac{\gammab(G)-1}{2}$ by Lemma~\ref{lem:radius-reduction}. If $r\not\equiv5\pmod6$, then $\MP(G)\geq \ceil{r/2}\geq r/2\geq \gammab(G)/2$ by Lemma~\ref{lem:radius-reduction}. If  $\gammab(G)\leq r-1$, then by Theorem~\ref{thm:radius-multipacking}, $\MP(G)\geq \floor{r/2}\geq \frac{r-1}{2}\geq \frac{\gammab(G)}{2}$.

\hfill $\square$
\end{proof}


\section{Relation between Broadcast Domination and Multipacking Number
(Proof of Theorem~\ref{thm:main-broadcast-bound})}
\label{sec:broadcast-multipacking}

We now consider the only case left by Corollary~\ref{cor:plus-one}, namely,
$\rad(G)\equiv5\pmod6$ and $\gammab(G)=\rad(G)$. For the construction below,
let $R=\rad(G)=6k+5$, where $k\geq1$. Choose an isometric path
$P=(x_0,x_1,\ldots,x_R)$,
put $c=x_{3k}$, and choose a vertex $w$ satisfying $d(c,w)\geq R$.  Let
$Q=(z_0,z_1,\ldots,z_R)$ with $z_0=c$
be a subpath of length $R$ of a shortest path from $c$ to $w$.
Then $Q$ is isometric.
Since $R=6k+5$, we have $\floor{R/3}=2k+1$ and
$\floor{(R-1)/6}=k$. Let 
$X_i=x_{3i}$ for $0\leq i\leq2k+1$ and
$Z_j=z_{R-3j}$ for $0\leq j\leq k-1$.  Then the set $M_R$ from
Lemma~\ref{lem:Tstructure} can be written as
\begin{equation}\label{eq:def-MR}
        M_R=\{X_i:0\leq i\leq2k+1\}
          \cup\{Z_j:0\leq j\leq k-1\}.
\end{equation}
By Lemma~\ref{lem:Tstructure}, $M_R$ is a multipacking of size $3k+2$.
For all indices $i,j$,
\begin{equation}\label{eq:MR-cross}
        d(X_i,Z_j)
        \geq R-3j-3|i-k|.
\end{equation}
This is true because, by the triangle inequality, we have
$d(X_k,Z_j)\leq d(X_k,X_i)+d(X_i,Z_j)$.

\begin{figure}[t]
    \centering
    \includegraphics[width=\textwidth]
    {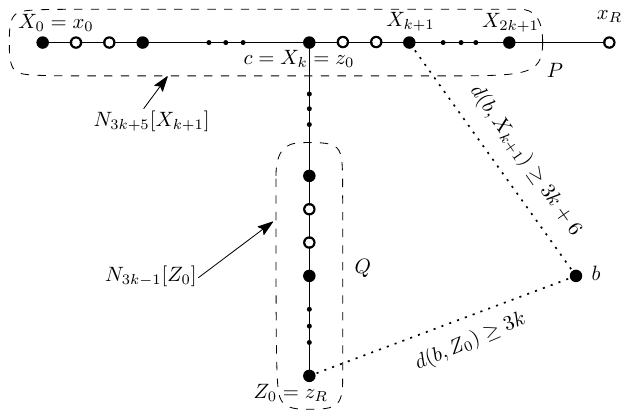}
    \caption{The horizontal path is an isometric path $P=(x_0,\ldots,x_R)$, and
    $Q=(z_0,\ldots,z_R)$ is an isometric path with initial vertex
    $z_0=x_{3k}$. The black disk vertices on $P$ and $Q$ represent vertices of multipacking $M_R$. The vertex $b$ is added to multipacking $M_R$ later.}
    \label{fig:TstructureOneVertex}
\end{figure}

For proving the remaining case of Theorem~\ref{thm:main-broadcast-bound}, we need the following lemma.

\begin{lemma}\label{lem:robust-MR}
Every $T\subseteq M_R$ with $|T|=t\geq4$ contains two vertices whose
distance is at least $2t+1$.
\end{lemma}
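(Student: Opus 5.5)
We mirror the proof of Lemma~\ref{lem:Tstructure}, with the target distance raised from $2t-1$ to $2t+1$. Let $T\subseteq M_R$ with $|T|=t\geq4$, and suppose for contradiction that every two vertices of $T$ are at distance at most $2t$. Write $A=\{X_i\}$, $B=\{Z_j\}$, $\alpha=|T\cap A|$, $\beta=|T\cap B|$. Here $h=3k$, $D=3k+3$ (the extreme index $x_{6k+3}$), and the smallest index of $B$ is $b=R-3(k-1)=3k+8$.

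\emph{Case $T\subseteq A$ or $T\subseteq B$.} The vertices lie on an isometric path with indices spaced by $3$, so the extreme pair is at distance at least $3(t-1)$. Since $3(t-1)\geq 2t+1$ exactly when $t\geq4$, this case is done, and it is where the hypothesis $t\geq4$ is used.

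\emph{Case $\alpha,\beta\geq1$.} Let $s$ be the minimum of $|i-h|$ over the $x$-indices in $T\cap A$, and let $j^*$ be the largest $z$-index in $T\cap B$. As before, $j^*\geq b+3(\beta-1)$, and \eqref{eq:cross-basic} gives $d\geq b+3(\beta-1)-s$. Comparing with the assumed bound $2t$ yields
$$s\geq b+\beta-2\alpha-3=3k+5+\beta-2\alpha.$$
\emph{One-sided subcase.} If all $A$-indices of $T$ lie on one side of $h$, then $s\leq D-3(\alpha-1)=3k+6-3\alpha$. Combined with the lower bound this gives $\alpha+\beta\leq 1$, a contradiction.
\emph{Two-sided subcase.} Otherwise \eqref{eq:index-difference} still holds, so $2t\geq 2s+3(\alpha-2)$. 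Substituting the lower bound on $s$ gives
$$2\alpha+2\beta\geq 6k+10+2\beta-4\alpha+3\alpha-6,$$
that is, $3\alpha\geq 6k+4$, so $\alpha\geq 2k+2=|A|$. Hence $A\subseteq T$, and in particular $x_0,x_{6k+3}\in T$. The assumed bound then gives $6k+3\leq 2(2k+2+\beta)$, so $\beta\geq k$ (up to the integer rounding, $\beta\geq k-\tfrac12$), forcing $T=M_R$ with $t=3k+2$. Now $z_0=x_{3k}\in A$ and $z_R=Z_0\in B$ are at distance $R=6k+5=2t+1$, which contradicts the assumption.

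The step needing the most care is this final two-sided subcase. There the slack from $b_\rho$ is exactly enough to force $A\subseteq T$, and the pair $(z_0,z_R)$ then finishes the argument, so the arithmetic must be checked precisely. If a bound comes out off by one, first recheck $b=3k+8$ and the inequality in \eqref{eq:index-difference}, including the case $h\in I_A$, where $s=0$.
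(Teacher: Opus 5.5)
Your proof is correct and follows essentially the same route as the paper: the same split into pure and mixed $T$, the same cross-distance bound $s\geq 3k+5+\beta-2\alpha$ (the paper writes it in $X$-index units as $3s\geq 3k+\beta+5-2\alpha$), and the same one-sided/two-sided dichotomy forcing $\alpha=2k+2$ and then $\beta\geq k$ via $d(X_0,X_{2k+1})=6k+3$. The only difference is the last step, and it is cosmetic: the paper substitutes $s=0$ back into the cross-distance inequality to get $\beta\leq k-1$, while you conclude $T=M_R$ and use $d(z_0,z_R)=R=2t+1$ directly, which is the same inequality written out explicitly.
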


\begin{proof}
Let
$A=\{X_i:0\leq i\leq2k+1\}$ and
$B=\{Z_j:0\leq j\leq k-1\}$. So, $M_R=A\cup B$.
Let $T\subseteq M_R$ with $|T|=t$.

If $T\subseteq A$ or $T\subseteq B$, then the vertices of $T$ corresponding
to the minimum and maximum indices are at distance at least $3(t-1)$.
Since $t\geq4$, we have $3(t-1)\geq2t+1$.

Assume that $T$ intersects both $A$ and $B$. Let
$\alpha=|T\cap A|$, $\beta=|T\cap B|$, and
$t=\alpha+\beta$. Define
$I=\{i:X_i\in T\}$ and $J=\{j:Z_j\in T\}$, and let
$s=\min_{i\in I}|i-k|$.

Let $j_0=\min J$. Since $J$ contains $\beta$ distinct indices from
$\{0,1,\ldots,k-1\}$, we have $j_0\leq k-\beta$. Choose
$i_0\in I$ such that $|i_0-k|=s$. By \eqref{eq:MR-cross},
\begin{equation}\label{eq:robust-cross}
d(X_{i_0},Z_{j_0})
\geq R-3j_0-3|i_0-k|
\geq3k+5+3\beta-3s.
\end{equation}

Suppose, for contradiction, we assume that every pair of vertices in $T$ is at
distance at most $2t$. Inequality~\eqref{eq:robust-cross} gives
\begin{equation}\label{eq:robust-strong}
3s\geq3k+\beta+5-2\alpha.
\end{equation}

Assume first that either
$I\subseteq\{0,\ldots,k\}$ or
$I\subseteq\{k,\ldots,2k+1\}$.
In the first case, $s\leq k+1-\alpha$, while in the second case,
$s\leq k+2-\alpha$. Thus, in either case,
$s\leq k+2-\alpha$. Combining this with
\eqref{eq:robust-strong} gives
$3k+\beta+5-2\alpha\leq3k+6-3\alpha$, and hence
$\alpha+\beta\leq1$. This contradicts
$\alpha,\beta\geq1$.

It follows that there exist $i_-,i_+\in I$ such that
$i_-<k<i_+$. Let $i_{\min}=\min I$ and $i_{\max}=\max I$. We claim that
$i_{\max}-i_{\min}\geq2s+\alpha-2$.

Let $\alpha_-$ and $\alpha_+$ be the numbers of indices in $I$ smaller and
larger than $k$, respectively. If $k\notin I$, then
$\alpha_-+\alpha_+=\alpha$, and hence
$i_{\max}-i_{\min}\geq
(s+\alpha_--1)+(s+\alpha_+-1)=2s+\alpha-2$.
If $k\in I$, then $s=0$ and
$\alpha_-+\alpha_+=\alpha-1$. Therefore,
$i_{\max}-i_{\min}\geq\alpha_-+\alpha_+=\alpha-1
\geq2s+\alpha-2$.

Since $P$ is isometric,
$d(X_{i_{\min}},X_{i_{\max}})
=3(i_{\max}-i_{\min})$.
Under the contradiction assumption, we obtain
$3(2s+\alpha-2)\leq2t$, and consequently
$6s+\alpha\leq2\beta+6$.
Combining this with \eqref{eq:robust-strong} gives
$3\alpha\geq6k+4$. Since $\alpha\leq2k+2$, we must have
$\alpha=2k+2$. Hence
$I=\{0,1,\ldots,2k+1\}$ and $s=0$.

Substituting $\alpha=2k+2$ and $s=0$ into
\eqref{eq:robust-strong} gives $\beta\leq k-1$. On the other hand,
$X_0,X_{2k+1}\in T$, and therefore
$d(X_0,X_{2k+1})=6k+3$. The contradiction assumption gives
$6k+3\leq2t=2(2k+2+\beta)$, which implies $\beta\geq k$.
This contradicts $\beta\leq k-1$.

Therefore, $T$ contains two vertices whose distance is at least $2t+1$.
\hfill $\square$
\end{proof}

\begin{lemma}\label{lem:five-class}
Let $G$ be a graph with $\rad(G)\equiv5\pmod6$ and $\gammab(G)=\rad(G)$. Then
$\gammab(G)\leq2\MP(G)$.
\end{lemma}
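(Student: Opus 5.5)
The plan is to exhibit a multipacking of size $3k+3$. Since $R=\rad(G)=\gammab(G)=6k+5$, this gives $2\MP(G)\geq6k+6>\gammab(G)$. I will start from the multipacking $M_R$ of \eqref{eq:def-MR}, of size $3k+2$, and find one extra vertex $b$ such that $M_R\cup\{b\}$ is still a multipacking.

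\textbf{Reduction to a distance condition on $b$.} To verify $M_R\cup\{b\}$, I use Lemma~\ref{lem:criterion}. Let $U\subseteq M_R\cup\{b\}$ with $|U|\geq2$. If $b\notin U$, the condition holds because $M_R$ is a multipacking by Lemma~\ref{lem:Tstructure}; more precisely, the proof of that lemma verifies the criterion for every such $U$. If $b\in U$ and $T=U\setminus\{b\}$ has $|T|\geq4$, then Lemma~\ref{lem:robust-MR} gives two vertices of $T$ at distance at least $2|T|+1=2|U|-1$. This is exactly why the robust lemma was proved. The remaining sets are those with $b\in U$ and $|T|\leq3$, so $|U|\leq4$. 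For these it suffices that $d(b,v)\geq7\geq2|U|-1$ for some $v\in T$. So the whole proof reduces to finding a vertex $b$ with $d(b,v)\geq7$ for every $v\in M_R$. A weaker requirement would also do, for example $d(b,v)\geq2|U|-1$ for just one $v\in T$, and I would fall back on it if needed.

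\textbf{Finding $b$ from the radius.} The idea is to use that every vertex has eccentricity at least $R$. I will pick a vertex $y$ on $Q$ such that all of $M_R$ lies in a small ball $\ball{y}{\rho}$. The vertices of $A$ lie within distance $3k+3$ of $z_0=x_{3k}$, and the vertices of $B$ are $z_{3k+8},\ldots,z_{6k+5}$. Taking $y=z_m$ with $m\approx(3k+2)/2$ balances the two bounds $m+3k+3$ and $6k+5-m$, giving $\rho\approx\tfrac{9k}{2}+4$. Choose $b$ with $d(y,b)\geq R$. Then $d(b,v)\geq R-\rho\approx\tfrac{3k}{2}+1$ for every $v\in M_R$, and this is at least $7$ once $k$ is moderately large (about $k\geq4$, after handling floors for the parity of $k$).

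\textbf{Small $k$ and the expected obstacle.} The main obstacle is the small cases $k\in\{1,2,3\}$ (i.e.\ $R=11,17,23$), where the simple eccentricity argument does not give distance $7$. Here I would use the hypothesis $\gammab(G)=R$ through a ball-cover argument. Suppose no valid $b$ exists. Then every vertex is close to $M_R$ in the refined sense, so $V(G)$ is covered by a few balls centred on $P$ and $Q$. For example, one ball centred near $x_{3k}$ covers the neighbourhood of $A$, and one ball centred on $Q$ covers the neighbourhood of $B$. The plan is to tune the radii so that the total cost is at most $R-1$. That yields a dominating broadcast of cost less than $\gammab(G)$, a contradiction. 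If the crude threshold $7$ makes this too expensive, I will use the refined requirement: $b$ only needs distance $3$, $5$, or $7$ from one member of each small $T$. Since $T$ contains pairs that are already far apart, this allows much smaller balls in the cover.
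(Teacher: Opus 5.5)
Your reduction is sound. By Lemma~\ref{lem:criterion}, the proof of Lemma~\ref{lem:Tstructure} and Lemma~\ref{lem:robust-MR}, only sets $U=T\cup\{b\}$ with $|T|\le 3$ remain. Your eccentricity argument with $y=z_m$, $m\approx(3k+2)/2$, also correctly produces an admissible $b$ for every $k\ge 4$, and it does not even use $\gammab(G)=R$; that part is a genuinely different route from the paper.

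\textbf{The gap.} The lemma also covers $R\in\{5,11,17,23\}$, and for these you give only a plan. The step ``suppose no valid $b$ exists; then $V(G)$ is covered by a few balls of total radius at most $R-1$'' is the entire content of the lemma in these cases. You never say which balls to use, and you never check that a vertex outside them meets the refined conditions. Moreover, $k=0$ is not addressed at all. There $M_R=\{x_0,x_3\}$ has its two vertices only $3$ apart, so $b$ would have to be at distance at least $5$ from one of them. You give no cost-$4$ cover guaranteeing this.

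\textbf{The missing idea} is a concrete pair of balls. The paper takes $N_{3k+5}[x_{3k+3}]$ and $N_{3k-1}[z_R]$, whose radii sum to $6k+4=R-1$. A vertex $b$ outside both satisfies $d(b,X_i)\ge 3k+6-3|i-(k+1)|$ and $d(b,Z_j)\ge 3k-3j$. Consequently:
\begin{itemize}
\item $b$ is at distance at least $3$ from all of $M_R$;
\item only $X_0$ and $Z_{k-1}$ can lie within distance $4$ of $b$, and $d(X_0,Z_{k-1})\ge 8$;
\item only $X_0,X_1,X_{2k+1},Z_{k-2},Z_{k-1}$ can lie within distance $6$ of $b$, and every three of these contain a pair at distance at least $7$.
\end{itemize}
This handles every $k\ge1$ at once, so the paper needs no separate large-$k$ argument. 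For $k=0$ the paper replaces $\{x_0,x_3\}$ by two vertices $u,v$ with $d(u,v)=5$ and takes $b\notin N_2[u]\cup N_2[v]$, a cover of cost $4<5$.

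As a side remark, your eccentricity argument with the refined requirement already settles $k=2$ and $k=3$. Take $y=z_4$, respectively $y=z_5$. Then $b$ is at distance at least $7$ from every vertex of $M_R$ except $X_{2k+1}$ and $Z_0$. Those two are at distance at least $4$ from $b$ and at least $8$ from each other. So in your approach only $k\in\{0,1\}$ genuinely need $\gammab(G)=R$, but as written those cases remain unproved.
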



\begin{proof}
Take $R=\rad(G)=\gammab(G)=6k+5$.

First suppose $k=0$.  Choose vertices $u,v$ at distance $5$, i.e. $d(u,v)=5$. The two radius-$2$ balls centered at $u$ and $v$ have total cost
$4$. If they covered $G$, they would define a dominating broadcast of cost
$4$, contradicting $\gammab(G)=5$. Therefore, they do not cover $G$. Choose $w$
outside their union.  Then $d(u,v)=5$ and
$d(w,u),d(w,v)\geq3$.
The set $\{u,v,w\}$ is a multipacking, since a radius-$1$ ball contains at most one
of its vertices, a radius-$2$ ball cannot contain both $u$ and $v$, and larger
radii are trivial.  Thus $\MP(G)\geq3$ and $\gammab(G)=5\leq6$.

Assume now $k\geq1$. Choose the paths $P,Q$ and the set $M_R$ as we described at the starting of this section.

Consider the two balls
\begin{equation}\label{eq:two-balls}
        \ball{x_{3k+3}}{3k+5}
        \quad\text{and}\quad
        \ball{z_R}{3k-1}.
\end{equation}
Their radii sum to $6k+4=R-1$. If they covered $G$, they would define a
dominating broadcast of cost $R-1$, contradicting $\gammab(G)=R$.
Therefore, they do not cover $G$. Choose $b\in V(G)$ outside their union. Then
\begin{equation}\label{eq:b-main}
        d(b,x_{3k+3})\geq3k+6
        \quad\text{and}\quad
        d(b,z_R)\geq3k.
\end{equation}
Every $X_i$ is within distance $3k+3$ of $x_{3k+3}$, and every $Z_j$ is within
distance $3k-3$ of $z_R$.  Hence $b\notin M_R$, and
$M_R\cup\{b\}$ has size $3k+3$. Let $M=M_R\cup\{b\}$; see Figure~\ref{fig:TstructureOneVertex}.

We prove that $M$ is a multipacking of $G$.  Let $\ball{v}{r}$ be any ball
with integer radius $r\geq1$.  If $b\notin\ball{v}{r}$, then
$|\ball{v}{r}\cap M|\leq r$ because $M_R$ is a multipacking by Lemma~\ref{lem:Tstructure}.  Suppose that
$b\in\ball{v}{r}$, and let $T=\ball{v}{r}\cap M_R$ and $t=|T|$.  We have
$t\leq r$, since $M_R$ is a multipacking.  There is nothing to prove when $t\leq r-1$, so assume $t=r$. Therefore, from now, we consider the case where $b\in\ball{v}{r}$ and $t=r$.

If $r\geq4$, Lemma~\ref{lem:robust-MR} gives two vertices of $T$ at distance at
least $2r+1$, impossible inside a radius-$r$ ball.

Suppose that $r\in\{1,2,3\}$. From triangle inequality, we have $d(b,X_i)\geq d(b,X_{k+1})-d(X_i,X_{k+1})$ and $d(b,Z_j)\geq d(b,Z_0)-d(Z_j,Z_0)$. Therefore, from \eqref{eq:b-main},
\begin{align}
 d(b,X_i)&\geq 3k+6-3|i-(k+1)|,
        &&0\leq i\leq2k+1, \label{eq:bX}\\
 d(b,Z_j)&\geq3k-3j,
        &&0\leq j\leq k-1. \label{eq:bZ}
\end{align}
Both lower bounds are at least $3$.  Thus a radius-$1$ ball containing $b$
contains no vertex of $M_R$.

For $r=2$, every vertex of $T$ is within distance $4$ of $b$.  Inequalities 
\eqref{eq:bX} and \eqref{eq:bZ} show that the only possibilities are $X_0$ and
$Z_{k-1}$.  But $d(X_0,Z_{k-1})\geq8$
by \eqref{eq:MR-cross}, whereas two vertices in a radius-$2$ ball are at
distance at most $4$.

For $r=3$, every vertex of $T$ is within distance $6$ of $b$.  Inequalities 
\eqref{eq:bX} and \eqref{eq:bZ} show that the only possibilities are $
 X_0, X_1, X_{2k+1}, Z_{k-2},Z_{k-1}$. Suppose $k=1$. We can exclude $Z_{k-2}$. Every three of the four
remaining vertices contain one of the pairs
$\{X_0,X_3\}$, $\{X_0,Z_0\}$, or $\{X_1,Z_0\}$, whose distances are at
least $9$, $8$, and $11$, respectively. The first
bound follows because $P$ is isometric and
$d(X_0,X_{3})=9$, while the remaining bounds follow from
\eqref{eq:MR-cross}.  If $k\geq2$, every three vertices
from $
 X_0, X_1, X_{2k+1}, Z_{k-2},Z_{k-1}$ contain one of the 
pairs
$\{X_0,X_{2k+1}\}$, $\{X_0,Z_{k-1}\}$, $\{X_0,Z_{k-2}\}$,
$\{X_1,Z_{k-1}\}$, or $\{X_{2k+1},Z_{k-2}\}$. The distances of these
pairs are at least $15$, $8$, $11$, $11$, and $8$, respectively. The first
bound follows because $P$ is isometric and
$d(X_0,X_{2k+1})=6k+3\geq 15$, while the remaining bounds follow from
\eqref{eq:MR-cross}. 
Thus the three vertices of $T$ contain a pair at distance at least $7$, which
is impossible in a radius-$3$ ball.

Consequently, $M$ is a multipacking, so $\MP(G)\geq3k+3$.  Finally, $
        \gammab(G)=6k+5
        <6k+6
        =2(3k+3)
        \leq2\MP(G).
$
\hfill $\square$
\end{proof}

Thus Corollary~\ref{cor:plus-one} and Lemma~\ref{lem:five-class} yield  Theorem~\ref{thm:main-broadcast-bound}.

\section{Approximation Algorithm
(Proof of Theorem~\ref{thm:multipacking-approximation})}
\label{sec:approximation}

The proof of Theorem~\ref{thm:radius-multipacking} and \ref{thm:main-broadcast-bound} can be turned into an approximation algorithm without computing
$\gammab(G)$.

\begin{lemma}\label{lem:approx}
There is a polynomial-time algorithm that, given a connected graph $G$,
constructs a multipacking $M$ of $G$ satisfying
$|M|\geq\MP(G)/2$.
\end{lemma}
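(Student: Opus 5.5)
The algorithm runs the constructions of Section~\ref{sec:radius-bound} and Lemma~\ref{lem:five-class} directly, using only distances and never the value $\gammab(G)$. First compute all pairwise distances by BFS from every vertex. This gives $r=\rad(G)$, a central vertex, and shortest paths, all in polynomial time. If $r\leq 2$, output a single vertex: then $\MP(G)\leq\gammab(G)\leq r\leq 2$, so one vertex suffices. Otherwise, pick an isometric path $P=(x_0,\ldots,x_r)$ (any subpath of a shortest path between two vertices at distance $\geq r$). Set $k=\floor{r/6}$, find a vertex $w$ with $d(x_{3k},w)\geq r$ (it exists since $e(x_{3k})\geq r$), and take $Q$ as the length-$r$ initial subpath of a shortest $x_{3k}$--$w$ path. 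Output $M_r$ from Lemma~\ref{lem:Tstructure}.

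If $r\not\equiv5\pmod6$, Lemma~\ref{lem:Tstructure} gives $|M_r|\geq r/2\geq\gammab(G)/2\geq\MP(G)/2$, using $\MP(G)\leq\gammab(G)\leq\rad(G)$.

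If $r=6k+5$, the algorithm tries to enlarge $M_r$. For $k\geq1$, it checks whether the union of the two balls $\ball{x_{3k+3}}{3k+5}$ and $\ball{z_R}{3k-1}$ misses some vertex $b$. If so, it outputs $M_r\cup\{b\}$. The proof of Lemma~\ref{lem:five-class} used $\gammab(G)=\rad(G)$ only to guarantee that $b$ exists; the multipacking verification used only inequality \eqref{eq:b-main}, Lemma~\ref{lem:robust-MR}, and \eqref{eq:MR-cross}. So $M_r\cup\{b\}$ is a multipacking of size $3k+3>r/2\geq\MP(G)/2$. For $k=0$, it searches the two radius-$2$ balls around a pair at distance $5$ for an uncovered vertex $w$ and outputs $\{u,v,w\}$, which is again valid by the argument of Lemma~\ref{lem:five-class}.

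If no such $b$ exists, the two balls form a dominating broadcast of cost $r-1$, so $\gammab(G)\leq r-1=6k+4$. Then $|M_r|=3k+2=(r-1)/2\geq\gammab(G)/2\geq\MP(G)/2$. The $k=0$ case works the same way: the two balls give $\gammab(G)\leq4$, while $|M_5|=2$.

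I expect the main obstacle to be confirming that the validity argument in Lemma~\ref{lem:five-class} really depends only on \eqref{eq:b-main} and not on $\gammab(G)=\rad(G)$ anywhere else. I would re-read that proof to check this. Every step is a BFS or a membership check, so the running time is polynomial.
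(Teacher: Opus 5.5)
Your proposal is correct and follows essentially the same route as the paper: build the two-path set $M_r$, and when $r\equiv5\pmod6$ run the ball-cover test, returning $M_r\cup\{b\}$ (valid because the verification in Lemma~\ref{lem:five-class} uses only \eqref{eq:b-main}) or else using the covering balls to certify $\gammab(G)\leq r-1$. Your deviations are harmless: the separate $r\leq2$ case, and returning $M_5$ instead of $\{u,v\}$ when $k=0$ and the balls cover $G$. One small slip: for $K_1$ the step $\gammab(G)\leq r$ fails under the convention $\gammab(K_1)=1$, but the conclusion still holds trivially since $\MP(K_1)=1$.
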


\begin{proof}
If $G=K_1$, return its unique vertex. Otherwise, compute
$r=\rad(G)$ and construct the isometric paths $P$ and $Q$ as in the proof of
Lemma~\ref{lem:radius-reduction}.

If $r\not\equiv5\pmod6$, return the multipacking $M_r$ from
Lemma~\ref{lem:Tstructure}. Suppose that $r=6k+5$. If $k=0$, choose vertices
$u,v$ with $d(u,v)=5$ and test whether
$\ball{u}{2}\cup\ball{v}{2}$ covers $G$. Return $\{u,v\}$ if it does;
otherwise, choose a vertex $b$ outside these balls and return
$\{u,v,b\}$.

If $k\geq1$, construct the multipacking $M_r$ from Lemma~\ref{lem:Tstructure}
and test whether the two balls in \eqref{eq:two-balls} cover $G$. Return
$M_r$ if they do; otherwise, choose a vertex $b$ outside their union and
return $M_r\cup\{b\}$.
Lemma~\ref{lem:Tstructure} and the proof of Lemma~\ref{lem:five-class} show
that every set returned by the algorithm is a multipacking.

In each case where the balls cover $G$, they define a dominating
broadcast of cost $2|M|$, and hence
$\MP(G)\leq\gammab(G)\leq2|M|$. In every other case,
$|M|\geq\ceil{r/2}$, while
$\MP(G)\leq\gammab(G)\leq r$. Therefore
$|M|\geq\MP(G)/2$.

The radius, the required paths, the balls, and an uncovered vertex can all be
computed in polynomial time using breadth-first search.
\hfill $\square$
\end{proof}

For disconnected graphs, both parameters are additive over components, so the
main inequality extends componentwise.

Hence, there is a polynomial time $2$-approximation algorithm for Maximum
Multipacking.

\section{Conclusion}\label{sec:conclusion}

The factor $2$ in Theorem~\ref{thm:main-broadcast-bound} cannot be reduced. Equality
already occurs for $C_4$ and $C_5$, where $\gammab(G)=2$ and $\MP(G)=1$. For more examples, see~\cite{teshima2012broadcasts}. More
significantly, hypercubes form an infinite family of connected graphs for which the ratio 
$\gammab(G)/\MP(G)$ tends to $2$ for arbitrarily large $\MP(G)$ \cite{rajendraprasad2025multipacking}.

We provide a polynomial-time $2$-approximation algorithm for
Maximum Multipacking. The implementation described in this paper first computes
$r=\rad(G)$. This can be done by running
breadth-first search (BFS) from every vertex, in $O(n(n+m))=O(nm)$ time for a
connected graph. Once the radius is known, the required paths, selected
vertices, and ball-cover tests can all be computed in $O(n+m)$ time. Thus the
overall running time of the algorithm presented here is $O(nm)$.

In fact, the running time can be improved to $O(n+m)$ by avoiding the explicit
computation of the radius. Starting from an arbitrary vertex, one breadth-first
search produces an isometric path $P$ whose length is at least $\rad(G)$. A
second breadth-first search, rooted at a suitably chosen vertex of $P$ near its
middle, produces the second isometric path $Q$. The two-path selection and the ball-cover test can then be used to give the same approximation
guarantee. However, the two BFS paths need not have the same length. Handling
this requires reindexing suitable subpaths and treating several additional
congruence cases, which makes the corresponding lemmas more technical and may
obscure the main structural idea and its interpretation. We
therefore omit this refinement and retain the simpler radius-based presentation.
Nevertheless, this refinement yields an $O(n+m)$-time factor-$2$ approximation
algorithm for Maximum Multipacking.

Since \textsc{Multipacking} is NP-complete on general graphs
\cite{das2025hardnessMultipackingESA}, it is natural to ask whether an approximation algorithm with factor
strictly below $2$ is possible in polynomial time, or whether it is tight.

\bibliographystyle{splncs04}
\bibliography{ref}

\end{document}